	\newcommand{\MSonehalfspacing}{
  		\setstretch{1.44}
  			\ifcase \@ptsize \relax
   		 \setstretch {1.448}
  			\or
   		 \setstretch {1.399}
  			\or
    	 \setstretch {1.433}
  			\fi								}				
	\newcommand{\MSdoublespacing}{
 		\setstretch {1.92}
 			\ifcase \@ptsize \relax
    	\setstretch {1.936}
  			\or
    	\setstretch {1.866}
  			\or
    	\setstretch {1.902}
  			\fi								}
\newtheorem{Satz}{Satz}[section]
\newtheorem{Definition}[Satz]{Definition}     
\newtheorem{Lemma}[Satz]{Lemma}	
\newtheorem{Remark}[Satz]{Remark}	
\newtheorem{Theorem}[Satz]{Theorem}                   
\numberwithin{equation}{section} 
\newcommand{\R}{\mathbb{R}} 
\newcommand{\N}{\mathbb{N}}
\newcommand*{\defeq}{\mathrel{\vcenter{\baselineskip0.5ex \lineskiplimit0pt
                     \hbox{\scriptsize.}\hbox{\scriptsize.}}}%
                     =} 
\begin{document}
\pagestyle{plain}
\title{A Constructive Winning Maker Strategy in the Maker-Breaker $C_4$-Game}
\author{Matthias Sowa and Anand Srivastav \\ %
	Department of Mathematics, Kiel University,\\
	Boschstr. 1, 24118 Kiel, Germany;\\
	\{sowa,srivastav\}@math.uni-kiel.de
}
\date{}
\maketitle

\begin{abstract}
Maker-Breaker subgraph games are among the most famous combinatorial games. For given $n,q\in \mathbb{N}$ and a subgraph $C$ of the complete graph $K_n$, the two players, called Maker and Breaker, alternately claim edges of $K_n$. In each round of the game Maker claims one edge and Breaker is allowed to claim up to $q$ edges. If Maker is able to claim all edges of a copy of $C$, he wins the game. Otherwise Breaker wins. In this work we introduce the first constructive strategy for Maker for the $C_4$-Maker-Breaker game and show that he can win the game if $q<0.16 n^{2/3}$. According to the theorem of Bednarska and Łuczak (2000) $n^{2/3}$ is asymptotically optimal for this game, but the constant given there for a random Maker strategy is magnitudes apart from our constant $0.16$.
\end{abstract}

\section{Introduction}
\subsection{Previous work}
Bednarska and Łuczak \cite{Bednarska-Luczak} proved that, if $C$ contains three nonisolated vertices, there exists constants $c_1,c_2>0$ such that for sufficiently large $n$ Maker can win, if $q\leq c_1 q^{1/m(C)}$ and Breaker can win, if $q\geq c_2 q^{1/m(C)}$, where $m(C):= max \big\{  \frac{\vert E(H)\vert-1}{\vert V(H)\vert-1}: H \text{ is a subgraph of }C,\,\vert V(H)\vert \geq 3 \big\}$. They conjectured that $c_1$ and $c_2$ could be choosen arbitrarily near to each other but this statement is not yet proven for any $C$ containing a circle. In the case of $C=C_3$ Chvátal and Erdős \cite{Chvatal-Erdos} showed that Maker can win the game if $q < \sqrt{2}\sqrt{n}$ and Glazik and Srivastav \cite{Glazik} gave a winning strategy for Breaker for $q> \sqrt{8/3}\sqrt{n}$. The techniques invented there can be generalized to prove that Breaker has a winning strategy in the $C_4$-game if $q>1.89 n^{2/3}$. Till now there was no constructive and deterministic strategy known for Maker and the constant $c_1$ for the random strategy for Maker in \cite{Bednarska-Luczak} can only be estimated by $0.81892\cdot10^{-6}$ (see \cite{Wolos}).

\subsection{Our contribution}
We will introduce a generalization of the minimum degree game which we call the partial minimum degree game and state a Maker strategy that wins the game within a certain time restriction. The partial minimum degree game is strongly connected to the $C_4$-Maker-Breaker game and we will show that Maker can win the $C_4$-game by playing our strategy for the partial degree game until a certain point of time.

\section{Maker's Strategy and it's Analysis}
\subsection{The Partial Minimum Degree Game}
In this chapter we introduce a generalization of the Maker-Breaker minimum degree game. In the minimum degree game Maker's goal is it to claim a subset $M$ of edges of $K_n=(V,E)$ such that $deg_M(v)\geq d(n)$ for every vertex of $v \in V$, where $d$ is a fixed function. Our generalization, which we call the partial minimum degree game, is the following: Given $\beta \in (0,1]$ Maker's goal is it to claim a subgraph $M$ such that there is a subset $X \subseteq V$ with $\vert X \vert \geq n \beta$ and $deg_M(x)\geq d(n)$ for every vertex $x\in X$.
Let us define the game formally.

\begin{Definition} 
	Let $\beta \in (0,1]$, $d:\N \to \R$, and 
	\begin{align*}
		\mathcal{F}_{\beta,d(n)} \defeq \bigg\{ M\subseteq E(K_n) \big\vert \exists X\subseteq V(K_n):|X|\geq \beta n  
		\land \forall x\in X \, deg_M(x) \geq d(n)\bigg\}.
	\end{align*}
	We call the Maker-Breaker game with the winning sets $\mathcal{F}_{\beta,d(n)}$ the $(\beta,d(n))$-partial minimum degree game.
\end{Definition}

For $\beta =1$ the $(1,d(n))$-partial minimum degree game is just the well-known minimum degree game. 
Beck \cite{Beck} showed that Maker has a winning strategy in the minimum degree game if $d(n)=\frac{1-\epsilon}{q(n)+1}n$, where 
$$\epsilon = \bigg( 1+O\bigg( q\sqrt{\frac{\log(n)}{(q+1)(n-1)}} \bigg) \bigg)2q\sqrt{\frac{\log(n)}{(q+1)(n-1)}}\,.$$
The following Theorem for the partial minimum degree game is similar to the result of Beck, but in our setting $\beta<1$ and this means that it is sufficient for Maker to achieve a high minimal degree only on a subset of vertices. Due to this we are able not only to give a much simpler proof but also to show that the number of rounds Maker needs to win is at most $\frac{dn}{2}$, where $d$ is the target degree. In fact, this bound on then number of turns is the key to tackle the $C_4$-game.

For any $d>0$ we state the $d$-degree strategy for Maker: In each of his turns Maker claims an unclaimed edge $\{v,w\}$ that fulfills $deg_M(v)<d$ and $deg_M(w)<d$, where $M$ is the graph of Maker's current edges. If he cannot do so, he stops. We will show that at this point Maker has already won the the partial minimum degree game.

\begin{Theorem} \label{theorem:pmd-game}
	Let $\delta>0$, $\alpha,\beta\in ]0,1[$, and $d:\N \to \R: n \mapsto  \delta n^{1-\alpha}$ .\\ We consider the partial minimum degree game with regard to $\mathcal{F}_{\beta,d(n)}$ played on the edges of $K_n$.
	Let $d=d(n)=\delta n^{1-\alpha}$.
	For sufficiently large $n \in \N$ and a bias $q<\frac{(1-\beta)^2}{\delta}n^\alpha$ Maker wins in at most $\frac{\delta}{2}n^{2-\alpha}$ turns by playing according to the $d$-degree strategy.
\end{Theorem}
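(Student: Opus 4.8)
The plan is to establish two things together: that a Maker who follows the $d$-degree strategy ends up owning a graph $M\in\mathcal{F}_{\beta,d(n)}$, and that this happens by turn $\tfrac{\delta}{2}n^{2-\alpha}$. Write $d=\delta n^{1-\alpha}$, $D=\lceil d\rceil$ and $V=V(K_n)$. The structural fact driving everything is the \emph{monotonicity} of the strategy: once $deg_M(v)\ge d$ Maker never touches $v$ again, so $deg_M(v)$ is frozen, and since the move that pushed it from below $d$ did so while $deg_M(v)<d$, the frozen value is exactly $D$. Hence at every stage the set $X:=\{v\in V:deg_M(v)\ge d\}$ only grows, every $v\in X$ has $deg_M(v)=D$, every $v\notin X$ has $deg_M(v)\le D-1$, and Maker has won the moment $|X|\ge\beta n$.

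For the turn count I would use a degree-sum (potential) argument. In one turn Maker raises the degrees of two vertices currently outside $X$, each by one and neither past $D$, so $\Psi:=\sum_{v\in V}\min(deg_M(v),D)$ increases by exactly $2$; thus after $t$ turns $\Psi=2t$. If Maker has not yet won after turn $t$, more than $(1-\beta)n$ vertices contribute at most $D-1$ to $\Psi$, so
\[
2t=\Psi\le |X|\,D+(n-|X|)(D-1)=n(D-1)+|X|<n(D-1)+\beta n ,
\]
and since $D-1+\beta$ and $d$ differ by less than one this bounds $t$ by $\tfrac{\delta}{2}n^{2-\alpha}$ for large $n$ (up to the mild rounding issue noted at the end). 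So the first turn at which $|X|$ hits $\beta n$ is at most $\tfrac{\delta}{2}n^{2-\alpha}$, and the whole statement reduces to: the strategy never stops Maker while $|X|<\beta n$.

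That last point is the crux. Suppose at some turn Maker cannot move — no unclaimed edge joins two vertices of $R:=V\setminus X$ — yet $|X|<\beta n$, so $r:=|R|>(1-\beta)n$. Then every one of the $\binom r2$ edges inside $R$ is already claimed. Maker owns at most $\tfrac12\sum_{v\in R}deg_M(v)\le\tfrac12 r(D-1)$ of them, so Breaker owns at least $\binom r2-\tfrac12 r(D-1)=\tfrac12 r(r-D)$ edges inside $R$. On the other hand, if Maker has played $s$ turns then Breaker has claimed at most $qs$ edges in all, while the degree-sum identity gives $2s=\sum_{v\in V}deg_M(v)\le nD-r$, so Breaker owns at most $\tfrac12 q(nD-r)$ edges. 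Comparing, $r(r-D)\le q(nD-r)$, i.e.
\[
r^2+r(q-D)\le qnD .
\]
As $q,D=o(n)$ the left side is increasing in $r$ for $r>(1-\beta)n$, so I would plug in $r=(1-\beta)n$; after rearranging, this inequality cannot hold once
\[
q\ \le\ \frac{(1-\beta)\bigl((1-\beta)n-D\bigr)}{D-(1-\beta)},
\]
and one checks that the hypothesis $q<\tfrac{(1-\beta)^2}{\delta}n^{\alpha}$ does force this bound for $n$ large enough — a contradiction, so the strategy cannot abort before $|X|\ge\beta n$.

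The real work, and the only delicate step, is this final numerical comparison. In $r^2+r(q-D)\le qnD$ the dominant terms on both sides are of size $(1-\beta)^2 n^2$ and cancel exactly, so whether a contradiction arises is decided entirely by the next-order terms — of order $n^{2-\alpha}$, coming from $D\sim\delta n^{1-\alpha}$ — and by the strictness of the bias bound. This is where the precise exponent in $d=\delta n^{1-\alpha}$, the exact constant $\tfrac{(1-\beta)^2}{\delta}$, and the ``$n$ sufficiently large'' hypothesis must all be used in concert; the same accounting, including careful control of the rounding $D=\lceil d\rceil$, is what upgrades the turn count from $O(n^{2-\alpha})$ to the stated $\tfrac{\delta}{2}n^{2-\alpha}$.
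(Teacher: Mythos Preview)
Your argument is correct and follows the same line as the paper's: a degree-sum bound forces the $d$-degree strategy to terminate within roughly $\tfrac{dn}{2}$ turns, and if at termination $|X|<\beta n$, then every edge inside $Y=V\setminus X$ is already claimed, which forces Breaker to own more edges than the turn bound permits. The only cosmetic difference is that the paper lower-bounds $|B|$ by estimating $|N_B(y)|$ for each $y\in Y$ through the covering $V\setminus\{y\}\subseteq X\cup N_M(y)\cup N_B(y)$, while you count Breaker's edges inside $R$ directly as $\binom{r}{2}$ minus Maker's contribution; the two estimates agree up to lower-order terms and lead to the same contradiction with $q<\tfrac{(1-\beta)^2}{\delta}n^{\alpha}$.
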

\begin{proof}
	Let $c<\frac{(1-\beta)^2}{\delta}$, $q=cn^\alpha$ and let $n\in\N$ be large enough so that the inequality $\frac{c\delta}{2}n^2<\frac{(1-\beta)^2}{2}n^2-\frac{1}{2}\big((1-\beta) n+\delta(1-\beta) n^{2-\alpha}\big)$ holds.\footnote{This is possible since $c\delta<(1-\beta)^2$ and the term $\frac{1}{2}\big((1-\beta) n+\delta(1-\beta) n^{2-\alpha}\big)$ is negligible for large $n$.} Maker plays according to the $(\delta n^{1-\alpha})$-degree strategy which means that he only claims edges $\{v,w\}$ with $deg_M(v)<\delta n^{1-\alpha}$ and $deg_M(v)<\delta n^{1-\alpha}$ as long as he is able to do so, where $M$ is the graph of Maker's edges.
	Note that after Maker has claimed more than $\frac{\delta}{2}n^{2-\alpha}$ edges we have $\sum_{v \in V}deg_M(v) > \delta n^{2-\alpha}$ and thus, by the pigeonhole principle, there has to be a vertex vertex $v$ with $deg_M(v)>\delta n^{1-\alpha}$.
	Therefore we know that after at most $\frac{\delta}{2}n^{2-\alpha}$ turns there is no unclaimed edge $\{v,w\}$ left with $deg_M(v)<\delta n^{1-\alpha}$ and $deg_M(w)<\delta n^{1-\alpha}$.
	Let be $M$ and $B$ the sets of Maker- and Breaker-edges respectively at this moment, and define $X\defeq \{ v\in V:\,deg_M(v)\geq\delta n^{1-\alpha} \}$ and $Y\defeq V\setminus X$. We assume for a moment that the assertion of the theorem is not true. Thus by Definition 2.1 of the winning sets, $|X|< \beta n$, therefore $|Y|\geq (1-\beta) n$. For each $y\in Y$, we have $V\setminus\{y\}\subseteq X \cup N_M(y) \cup N_B(y)$ because otherwise for $v \in \big(V\setminus\{y\}\big)\setminus \big(X \cup N_M(y) \cup N_B(y)\big)$ the edge $\{v,y\}$ would be still unclaimed, which is a contradiction to the assumption that Maker cannot claim an edge containing two vertices with degree smaller than $\delta n^{1-\alpha}$. Therefore, $n-1\leq |X|+|N_M(y)|+|N_B(y)|$ which implies $(1-\beta)n-1-\delta n^{1-\alpha}\leq|N_B(y)|$. This means, Breaker has occupied at least $(1-\beta)n-1-\delta n^{1-\alpha}$ edges incident in every vertex of $Y$. So in total the number of edges claimed by Breaker is at least
\begin{align*}
	&\frac{\vert Y \vert}{2}\big((1-\beta) n-1-\delta n^{1-\alpha}\big)\\
	\geq&\frac{(1-\beta)n}{2}\big((1-\beta) n-1-\delta n^{1-\alpha}\big)\\
	=&\frac{(1-\beta)^2}{2}n^2-\frac{1}{2}\big((1-\beta) n+\delta(1-\beta) n^{2-\alpha}\big)\\
	>&\frac{c\delta}{2}n^2 =\frac{\delta}{2}n^{2-\alpha} q
\end{align*}
which is a contradiction to the fact that at most $\frac{\delta}{2}n^{2-\alpha}$ rounds were played.	
\end{proof}

\subsection{A Winning Strategy for Maker for the $C_4$-Game}
In this section we will give a constructive, asymptotically optimal Maker strategy for the Maker-Breaker $C_4$-Game. We will use Theorem \ref{theorem:pmd-game} for the partial minimum degree game to establish a new lower bound for the threshold bias for the $C_4$-Game.

\begin{Theorem}\label{theorem:C4}
	Let $c <0.16$ and $q= q(n) = cn^{2/3}$ . Then for sufficiently large $n$ there is a winning strategy
	for Maker in the Maker-Breaker-$C_4$-Game.
\end{Theorem}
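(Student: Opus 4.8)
The plan is to run Maker's $d$-degree strategy from Theorem~\ref{theorem:pmd-game} as a first phase that secures many vertices of moderately large Maker-degree, and then to use the remaining moves to thread one new vertex through two of the resulting neighbourhoods, thereby closing a $C_4$. The constant $0.16$ is the outcome of balancing the parameter constraints of the two phases.

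\emph{Phase~1.} Since the bias is $q=cn^{2/3}$, the only way to satisfy the hypothesis $q<\frac{(1-\beta)^2}{\delta}n^{\alpha}$ of Theorem~\ref{theorem:pmd-game} for fixed $c$ and large $n$ is to take $\alpha=\tfrac23$, so the target degree is $d=\delta n^{1/3}$ and the phase lasts at most $\frac{\delta}{2}n^{4/3}$ turns. Fix $\beta,\delta\in(0,1)$ with $c<\frac{(1-\beta)^2}{\delta}$; this forces $c\delta<(1-\beta)^2<1$, so $\frac{\delta}{2}n^{4/3}$ is well below the $\tfrac{1}{2c}n^{4/3}(1-o(1))$ turns of the whole game, and $\Omega(n^{4/3})$ turns remain afterwards. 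By Theorem~\ref{theorem:pmd-game}, after Phase~1 Maker owns a set $X$ with $|X|\ge\beta n$ and $deg_M(x)\ge\delta n^{1/3}$ for every $x\in X$; moreover $deg_M(v)\le\delta n^{1/3}+1$ for all $v$ by the $d$-degree rule, Maker has claimed at most $\frac{\delta}{2}n^{4/3}$ edges, and Breaker at most $\frac{\delta}{2}n^{4/3}\cdot q=\frac{c\delta}{2}n^{2}$ edges.

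\emph{Phase~2.} Suppose Maker never completes a $C_4$, so $M$ stays $C_4$-free until the end of the game. Double counting the Maker-edges between $X$ and $V$ gives $\sum_{v\in V}|N_M(v)\cap X|=\sum_{x\in X}deg_M(x)\ge\beta\delta n^{4/3}$, and since each summand is at most $\delta n^{1/3}+1$, a Markov-type estimate produces $\Omega(n)$ \emph{central} vertices $b$ with $|N_M(b)\cap X|\gtrsim\beta\delta n^{1/3}$. Fix a central $b$ and let $x_1,\dots,x_t$, $t\gtrsim\beta\delta n^{1/3}$, be the hubs with $b\in N_M(x_i)$. As $M$ is $C_4$-free and every $N_M(x_i)$ contains $b$, the sets $N_M(x_i)$ are pairwise disjoint outside $\{b\}$, so their union $U_b:=\bigcup_i N_M(x_i)$ has $|U_b|\ge 1+t(\delta n^{1/3}-1)\gtrsim\beta\delta^{2}n^{2/3}$. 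The role of $U_b$ is this: for any $x'$ with $\{x',b\}$ unclaimed, once Maker claims $\{x',b\}$, every unclaimed edge $\{x',w\}$ with $w\in U_b\setminus\{b\}$ is a winning move, since if $w\in N_M(x_i)$ then $x_i\,b\,x'\,w$ is a $C_4$ in $M$ on four distinct vertices.

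It remains to find a central $b$ and a vertex $x'$ such that, after Maker plays $\{x',b\}$, more than $q$ of these winning edges survive; this is the crux of the argument and the step I expect to be the main obstacle. The difficulty is that Breaker's $\tfrac{c\delta}{2}n^{2}$ preclaimed edges can be concentrated near the hub-neighbourhoods, so averaging over $x'$ for a single $b$ is not enough — one must also use the $\Omega(n)$ choices of central $b$. Concretely I would double count $\sum_{b\text{ central}}\sum_{x'\in V}|N_B(x')\cap U_b|=\sum_b\sum_{w\in U_b}deg_B(w)=\sum_{w}deg_B(w)\,|\{b:w\in U_b\}|$, use the symmetry $|\{b:w\in U_b\}|=|U_w|\le(\delta n^{1/3}+1)^{2}$ and the bound $\sum_w deg_B(w)\le c\delta n^{2}$, to extract a central $b$ and a vertex $x'$ with $\{x',b\}$ unclaimed and $|N_B(x')\cap U_b|\lesssim\tfrac{c\delta^{3}}{\beta}n^{2/3}$. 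Then at least $|U_b|-|N_B(x')\cap U_b|-deg_M(x')-1\gtrsim\bigl(\tfrac{\beta\delta^{2}}{2}-\tfrac{c\delta^{3}}{\beta}\bigr)n^{2/3}$ winning edges remain after Maker plays $\{x',b\}$; since Breaker can destroy only $q=cn^{2/3}$ of them on his turn, Maker wins in one more move provided a polynomial inequality of the shape $\tfrac{\beta\delta^{2}}{2}-\tfrac{c\delta^{3}}{\beta}>c$ holds. Together with the Phase~1 requirement $c<\frac{(1-\beta)^2}{\delta}$ this is a finite system of polynomial inequalities in $\beta,\delta,c$; optimising the feasible $c$ over $\beta,\delta\in(0,1)$ — the delicate point being to carry the near-disjointness, the two selections, and Breaker's single parry without losing constant factors — yields the stated range $c<0.16$.
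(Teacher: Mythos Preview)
Your Phase~1 matches the paper's (it also runs the $\delta n^{1/3}$-degree strategy, though with $\delta>1$ rather than $\delta\in(0,1)$; this is inessential). Phase~2, however, is a genuinely different construction, and the final claim that it yields $c<0.16$ is not supported.

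The paper's Phase~2 is simpler and more symmetric than yours. It does not pick a central vertex at all; instead it looks at \emph{every} edge $e$ with both endpoints in $X$, notes that $|T_e|\ge\delta^{2}n^{2/3}-1$ (so there are at least $\binom{\beta n}{2}$ such ``dangerous'' edges), and splits them into directly deactivated ($e\in B$), indirectly deactivated ($|T_e\setminus B|\le q$), and active. The key double count uses the symmetry $e\in T_b\Leftrightarrow b\in T_e$ together with the uniform bound $|T_b|\le\delta^{2}n^{2/3}$ to obtain $|D_i|\bigl(1-\tfrac{c}{\delta^{2}}-o(1)\bigr)\le|B|\le\tfrac{c\delta}{2}n^{2}$. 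Combining this with $|D\setminus D_d|\ge\binom{\beta n}{2}-\tfrac{c\delta}{2}n^{2}$ and $c\delta<(1-\beta)^{2}$ forces an active dangerous edge to exist precisely when $\beta>0.6$, and $0.16=(1-0.6)^{2}$ is exactly where the constant comes from. Maker then claims one active dangerous edge and wins on the next move.

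Your scheme instead makes the first Phase-2 edge $\{x',b\}$ with all threats incident to the single vertex $x'$. This loses constants in two places: the threat count $|U_b|\approx\gamma\delta^{2}n^{2/3}$ carries a centrality threshold $\gamma<\beta$ (the paper's $|T_e|\approx\delta^{2}n^{2/3}$ has no such factor), and your averaging is over only about $(\kappa-c\delta)n^{2}$ pairs $(b,x')$ with $\{x',b\}$ unclaimed, whereas the paper averages over $\binom{\beta n}{2}\sim\tfrac{\beta^{2}}{2}n^{2}$ dangerous edges. Concretely, optimizing your own stated system $\tfrac{\beta\delta^{2}}{2}-\tfrac{c\delta^{3}}{\beta}>c$ and $c\delta<(1-\beta)^{2}$ over $\beta,\delta\in(0,1)$ gives at best $c\approx 0.13$ (near $\beta\approx0.64$, $\delta\to 1$), and a version that honestly tracks the $\gamma$ and $\kappa$ losses does worse still. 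So your outline plausibly proves the theorem for \emph{some} positive constant, but the assertion that it ``yields the stated range $c<0.16$'' is a gap; reaching $0.16$ appears to require the paper's edge-based counting over $\binom{X}{2}$ rather than the vertex-centered fan construction.
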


For the proof of the Theorem, we fix $\delta >1$ with $c\delta< 0.16$ and \\${\textstyle \beta \in \textstyle (0.6,1-\sqrt{c\delta})}$ and assume that  $n$ is sufficiently large. The parameters $c$, $\delta$, $q$, and $\beta$ are fixes for the remainder of this work.\\
\textit{\textbf{Maker's strategy for the $C_4$-game}:
Maker plays according to the $\delta n^{1/3}$-degree strategy. }\\
Note that by Theorem \ref{theorem:pmd-game} he wins the $(\beta,\delta n^{2/3})$-partial minimum degree. We now show that Maker needs at most two additional turns to win the $C_4$-game. First we need a few definitions to analyze the game state at the point of time, when Maker would win the partial minimum degree game. We assume that at this point of time Maker has not yet won the $C_4$-Game and show that he needs only two more turns to win it.

\begin{Definition}\label{definition: dangerous}
	Let $M, B\subseteq E(K_n)$ be Maker's graph and Breaker's graph respectively, and let $X\defeq \{ v\in V:\,deg_M(v)\geq\delta n^{1/3} \}$.
	\begin{enumerate}
		\renewcommand{\labelenumi}{(\roman{enumi})}
		\item For each $a =\{a_1,a_2\}\in E$ we call the edges in
		$$T_a \defeq \big\{ \{b_1,b_2\}\in E : b_1\in N_M(a_1) \land b_2\in N_M(a_2) \big\}\setminus \{a\}$$
		the threats for Breaker induced by the edge $a$.
		\item $D \defeq \big\{ e\in E: \vert T_e \vert \geq \delta^2 n^{2/3}-1\big\}$ is the set of dangerous edges (for Breaker).
		\item $D_d \defeq D\cap B$ is the set of directly deactivated edges.
		\item $D_i \defeq \big\{ e\in D: \vert T_e \setminus B \vert \leq q\big\}$ is the set of indirectly deactivated edges.
		\item $D_a \defeq D\setminus(D_d \cup D_i)$ is the set of active dangerous edges.
	\end{enumerate}
	\begin{center}
  		\begin{minipage}{\linewidth}
		    \centering
  		\begin{tikzpicture}
			\useasboundingbox (-4.96,6.8) rectangle (100mm,2.5);
			\tikzstyle{blind} =[circle,fill=black!0, inner sep=2pt]
			\tikzstyle{vertex} =[circle,fill=black!60, inner sep=1.5pt]
			\tikzstyle{edgeG} =[very thick]
			\tikzstyle{edgeM} =[very thick, red!100]
			\tikzstyle{edgeB} =[very thick, blue!100]
			\node[vertex](v) at (0,3){};
			\node[vertex](w) at (4,3){};
			\node[vertex](v1) at (1,4){};
			\node[vertex](v2) at (1,5){};
			\node[vertex](v3) at (1,6){};
			\node[vertex](w1) at (3,4){};
			\node[vertex](w2) at (3,6){};
			\draw[edgeM] (v)--(w);
			\draw[edgeG] (v)--(v1);
			\draw[edgeG] (v)--(v2);
			\draw[edgeG] (v)--(v3);
			\draw[edgeG] (w)--(w1);
			\draw[edgeG] (w)--(w2);
			\draw[edgeB] (v1)--(w1);
			\draw[edgeB] (v2)--(w1);
			\draw[edgeB] (v3)--(w1);
			\draw[edgeB] (v1)--(w2);
			\draw[edgeB] (v2)--(w2);
			\draw[edgeB] (v3)--(w2);
			\node[text width=5cm, blue!100] at (4.3,6.3) {$T_e$};
			\node[text width=5cm, red!100] at (4.3,2.7) {$e$};
		\end{tikzpicture}\\
		\end{minipage}
	\end{center}
\end{Definition}

Due to Maker's strategy, we have $deg_M(v)=\delta n^{2/3}$ for each vertex $v\in X$. Maker can create a $C_4$ if he claims an edge $e$ and also an edge from $T_e$. Because Breaker is allowed to claim $q$ edges in each turn, Maker has to claim an edge that induces more than $q$ threats in one of his turns in order to win the game. The edges of $D$ could induce the highest number of threats and are therefore called dangerous. Breaker can stop Maker from claiming a dangerous edge by claiming the edge himself and thereby deactivating the danger of the edge. The set of edges that are dangerous but deactivated by Breaker in this direct way is $D_d$. The second way for Breaker to deactivate a dangerous edge $e$ is claiming enough edges of $T_e$ such that there are no more than $q$ edges left in $T_e$ unclaimed by Breaker. In this case Breaker would be able to claim all remaining threats if Maker claims $e$. The set of the edges that are deactivated in this indirect way is $D_i$. The set of edges that are dangerous and neither deactivated directly nor indirectly is $D_a$. These are the active dangerous edges and if at any point of time Maker is able to claim one of these he will present more threats than Breaker can deny and Maker will win in the next turn.

\begin{Remark}\label{remark: switching edges}
	It is evident that for all $e \in E$ the following two statements hold.
	\begin{enumerate}
		\renewcommand{\labelenumi}{(\roman{enumi})}
		\item $e \in T_a \Leftrightarrow a\in T_e$ for all $a\in E$.
		\item $T_e = \{ a\in E : e\in T_a \}$.
	\end{enumerate}
\end{Remark}

\begin{Lemma}\label{lemma: threats}
	Let $e=\{v_1,v_2\}\in E$ be an edge with $d \defeq deg_M(v_1)=deg_M(v_2)$. We assume that $M$ does not contain a copy of $C_4$.\\Then $\vert T_e \vert \in \{ d^2-1,d^2 \}$.
\end{Lemma}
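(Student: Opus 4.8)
The plan is to express $|T_e|$ in terms of the number of common $M$-neighbours of $v_1$ and $v_2$, and to observe that the absence of a $C_4$ in $M$ forces that number to be $0$ or $1$.

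First I would prove the structural fact that $\lvert N_M(v_1)\cap N_M(v_2)\rvert\le 1$. Suppose not, and pick two distinct common $M$-neighbours $u,w$. Since $u,w\in N_M(v_1)$ we have $u,w\ne v_1$, since $u,w\in N_M(v_2)$ we have $u,w\ne v_2$, and $v_1\ne v_2$ because $e\in E$; hence $v_1,u,v_2,w$ are pairwise distinct, and the edges $\{v_1,u\},\{u,v_2\},\{v_2,w\},\{w,v_1\}$ all lie in $M$ and form a copy of $C_4$ in $M$, a contradiction.

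Next I would count the ``bundle'' $S\defeq\{\{b_1,b_2\}\in E:\ b_1\in N_M(v_1),\ b_2\in N_M(v_2)\}$, for which $T_e=S\setminus\{e\}$. I would run over the $d^2$ ordered pairs in $N_M(v_1)\times N_M(v_2)$ under the assignment $(b_1,b_2)\mapsto\{b_1,b_2\}$. A pair yields no edge exactly when $b_1=b_2$, i.e.\ $b_1=b_2\in N_M(v_1)\cap N_M(v_2)$, which by the previous step occurs for at most one pair; on all other pairs the assignment is injective, since $(b_1,b_2)\ne(b_1',b_2')$ mapping to the same edge would force $\{b_1',b_2'\}=\{b_2,b_1\}$, making $b_1,b_2$ two distinct common $M$-neighbours of $v_1$ and $v_2$ and again contradicting the previous step. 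Hence $\lvert S\rvert=d^2-\lvert N_M(v_1)\cap N_M(v_2)\rvert\in\{d^2-1,d^2\}$. Finally, since $v_1\notin N_M(v_1)$ and $v_2\notin N_M(v_2)$, one has $e=\{v_1,v_2\}\in S$ if and only if $v_2\in N_M(v_1)$ and $v_1\in N_M(v_2)$, i.e.\ $e\in M$; as $T_e$ is only of interest for edges Maker has not yet claimed we may take $e\notin M$, whence $e\notin S$, $T_e=S$, and $\lvert T_e\rvert\in\{d^2-1,d^2\}$.

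I expect the only step with real content to be the first one, the $C_4$-obstruction bounding the common neighbourhood; the remainder is bookkeeping, where the two places demanding care are the diagonal pairs $b_1=b_2$ and whether $e$ itself is counted in $S$ — precisely the two potential sources of the ``$-1$''.
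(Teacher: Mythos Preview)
Your argument follows the same route as the paper's: use $C_4$-freeness to get $|N_M(v_1)\cap N_M(v_2)|\le 1$, then count pairs. You are in fact more careful than the paper on one point, namely whether $e$ itself lies in $S$: the paper's Case~2 formula $(d-1)d+(d-1)$ never subtracts $e$ and thus tacitly assumes $e\notin M$, whereas you make this assumption explicit. Your observation is well placed---if $e\in M$ and $v_1,v_2$ have a common $M$-neighbour then in fact $|T_e|=d^2-2$, so the lemma as stated really needs the hypothesis $e\notin M$; this is harmless for the downstream application in Lemma~\ref{lemma:dangerous edges}.
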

\begin{proof}
	If $N_M(v_1)\cap N_M(v_2)=\emptyset$, we have $\vert T_e \vert = deg_M(v_1)\,deg_M(v_2)=d^2$.
	Otherwise let $x,y \in N_M(v_1)\cap N_M(v_2)$. Since $M$ does not contain a copy of $C_4$, the closed walk $(v_1,x,v_2,y,v_1)$ cannot be a $C_4$, therefore $x=y$. Thus $\vert N_M(v_1)\cap N_M(v_2)\vert =1$. It follows 
	\begin{align*}
	\vert T_e \vert 
	&= \vert N_M(v_1) \setminus \{x\}\vert \, \vert N_M(v_2) \vert + \vert N_M(v_2) \setminus \{x\}\vert\\ 
	&= (d-1)d+(d-1)=d^2-1
	\end{align*}
\end{proof}

\begin{Lemma}\label{lemma:dangerous edges}
	If $n$ is sufficiently large and $M$ doesn't contain a copy of $C_4$, the following statements hold:
	\begin{enumerate}
		\renewcommand{\labelenumi}{(\roman{enumi})}
		\item $\vert B \vert \leq \frac{c \delta}{2}n^2$
		\item $\vert D \vert \geq \binom{\beta n}{2}$
		\item $D \setminus D_d \subseteq D_i \cup D_a$
		\item $\vert D \setminus D_d \vert \geq \binom{\beta n}{2}-\frac{c \delta}{2}n^2$
	        	\item $\vert B \vert \geq \vert D_i \vert (1-\frac{c}{\delta^2}-n^{-2/3})$
		\item $\vert D_a \vert > 0$
	\end{enumerate}
\end{Lemma}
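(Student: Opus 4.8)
The plan is to prove the six items in the stated order, using (i) and (ii) as the two independent inputs from which everything else follows; only the final numerical estimate in (vi) requires real care. For (i) I would invoke Theorem~\ref{theorem:pmd-game} directly: Maker plays the $d$-degree strategy with $d(n)=\delta n^{1/3}$, i.e.\ $\alpha=2/3$ there, and the bias condition $q=cn^{2/3}<\tfrac{(1-\beta)^2}{\delta}n^{2/3}$ is precisely $c\delta<(1-\beta)^2$, which holds because $\beta<1-\sqrt{c\delta}$. Hence Maker wins the $(\beta,\delta n^{1/3})$-partial minimum degree game within $\tfrac{\delta}{2}n^{4/3}$ of his turns, so Breaker has claimed at most $q\cdot\tfrac{\delta}{2}n^{4/3}=\tfrac{c\delta}{2}n^2$ edges. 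For (ii): at that moment the winning condition gives a set $X$ with $|X|\ge\beta n$ on which $deg_M\ge\delta n^{1/3}$, and since Maker's strategy never claims an edge at a vertex of $M$-degree $\ge\delta n^{1/3}$, in fact $deg_M(v)=\delta n^{1/3}$ for $v\in X$ and $deg_M(v)\le\delta n^{1/3}$ for all $v\in V$. For any $v_1,v_2\in X$, Lemma~\ref{lemma: threats} --- applicable since $M$ is $C_4$-free --- gives $|T_{\{v_1,v_2\}}|\ge(\delta n^{1/3})^2-1=\delta^2 n^{2/3}-1$, so $\{v_1,v_2\}\in D$; this exhibits $\binom{|X|}{2}\ge\binom{\beta n}{2}$ edges of $D$.

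Item (iii) is pure bookkeeping with Definition~\ref{definition: dangerous}: $D_d=D\cap B\subseteq D$, $D_i\subseteq D$, and $D_a\defeq D\setminus(D_d\cup D_i)$, so $D=D_d\cup D_i\cup D_a$ and thus $D\setminus D_d\subseteq D_i\cup D_a$. Item (iv) combines (i), (ii) and $D_d\subseteq B$: $|D\setminus D_d|\ge|D|-|B|\ge\binom{\beta n}{2}-\tfrac{c\delta}{2}n^2$. Item (v) is the one genuine computation --- a double count of $P\defeq\{(e,b)\in D_i\times B:b\in T_e\}$. On one side, every $e\in D_i\subseteq D$ has $|T_e|\ge\delta^2 n^{2/3}-1$ and $|T_e\setminus B|\le q$ by the definition of $D_i$, hence $|T_e\cap B|\ge\delta^2 n^{2/3}-1-cn^{2/3}$ and $|P|\ge|D_i|(\delta^2 n^{2/3}-1-cn^{2/3})$. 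On the other side, Remark~\ref{remark: switching edges}(i) gives $b\in T_e\Leftrightarrow e\in T_b$, so $|P|=\sum_{b\in B}|T_b\cap D_i|\le\sum_{b\in B}|T_b|\le|B|\,(\delta n^{1/3})^2$, the last inequality because every $M$-degree is at most $\delta n^{1/3}$. Comparing the two estimates and dividing by $\delta^2 n^{2/3}$ gives $|B|\ge|D_i|\bigl(1-\tfrac{1}{\delta^2 n^{2/3}}-\tfrac{c}{\delta^2}\bigr)$, and $\tfrac{1}{\delta^2 n^{2/3}}\le n^{-2/3}$ because $\delta>1$, which is (v).

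Finally, for (vi), (iii) and $D_d\subseteq B$ give $|D_a|=|D|-|D_d\cup D_i|\ge|D|-|B|-|D_i|$; plugging in (ii), the rearranged form $|D_i|\le|B|/(1-\tfrac{c}{\delta^2}-n^{-2/3})$ of (v) (the denominator is positive for large $n$, as $c/\delta^2<0.16$), and $|B|\le\tfrac{c\delta}{2}n^2$ from (i) yields $|D_a|\ge\binom{\beta n}{2}-\tfrac{c\delta}{2}n^2\bigl(1+\tfrac{1}{1-c/\delta^2-n^{-2/3}}\bigr)$. For large $n$ the subtracted quantity is at most $\tfrac{c\delta}{2}\cdot\tfrac{2-c/\delta^2}{1-c/\delta^2}\,n^2(1+o(1))$, and using $c\delta<0.16$ and $c/\delta^2<c<0.16$ together with the monotonicity of $x\mapsto\tfrac{2-x}{1-x}$ this is below $\tfrac12\cdot0.16\cdot\tfrac{1.84}{0.84}\,n^2(1+o(1))<0.176\,n^2(1+o(1))$, whereas $\binom{\beta n}{2}=\tfrac{\beta^2}{2}n^2(1-o(1))>0.18\,n^2(1-o(1))$ since $\beta>0.6$. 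As $0.18>0.176$, the right-hand side is positive --- in fact $\Omega(n^2)$ --- for $n$ large, so $|D_a|>0$. I expect this last numerical margin, together with correctly tracking that the denominator arising from the double count in (v) is governed by $c/\delta^2$ rather than $c\delta$, to be the only delicate point; it is exactly what forces the hypotheses $c<0.16$, $c\delta<0.16$ and $\beta>0.6$. Everything else is a routine unwinding of Definition~\ref{definition: dangerous}.
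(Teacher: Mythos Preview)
Your proof is correct and follows essentially the same approach as the paper's: items (i)--(v) are handled identically (Theorem~\ref{theorem:pmd-game} for (i), Lemma~\ref{lemma: threats} on $\binom{X}{2}$ for (ii), set algebra for (iii)--(iv), and the same double count via Remark~\ref{remark: switching edges} for (v)). The only cosmetic difference is in (vi): the paper argues by contradiction and phrases the decisive inequality parametrically as $(\beta^2-(1-\beta)^2)(1-(1-\beta)^2)>(1-\beta)^2$ (valid for $\beta>0.6$, using $c\delta<(1-\beta)^2$), whereas you give a direct lower bound on $|D_a|$ with the explicit constants $0.16$, $0.176$, $0.18$; both routes reduce to the same numerical comparison.
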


\begin{proof}
\begin{enumerate}
\renewcommand{\labelenumi}{(\roman{enumi})}
\item By Theorem \ref{theorem:pmd-game},
Maker can win the $(\beta,\delta n^{2/3})$-partial minimum degree game in at most $\frac{\delta}{2}n^{4/3}$ rounds, so Breaker has claimed at most $q \frac{\delta}{2}n^{4/3}= \frac{c \delta}{2}n^2$ edges.
\item Since $deg_M(v)\geq \lceil \delta n^{1/3} \rceil$ for all $v\in X$, by Lemma \ref{lemma: threats} we have $\vert T_e \vert \geq \delta^2 n^{2/3}-1$ for each edge $e\in E$ with $e\subseteq X$. Thus $\binom{X}{2}\subseteq D$. By Theorem \ref{theorem:pmd-game}, we have $\vert X \vert \geq \beta n$ and therefore $\vert D \vert \geq \binom{\vert X \vert}{2} \geq \binom{\beta n}{2}$.
\item Since $D_a = D\setminus(D_d \cup D_i)$, the statement is evident.
\item We know from (i) that $\frac{c \delta}{2}n^2 \geq \vert B \vert \geq \vert B\cap D \vert = \vert D_d \vert$. The statement follows from (ii).
\item Since Maker plays by the $\delta n^{1/3}$-degree strategy, each vertex has Maker degree of at most $\delta n^{1/3}$, which implies $\vert T_e \vert \leq \delta^2 n^{2/3}$ for each $e\in E$.\\
It follows
\begin{flalign*}
	\textstyle \vert D_i \vert \big((\delta^2-&c)n^{2/3}-1\big)\\
	&= \sum_{e \in D_i}\big((\delta^2-c)n^{2/3}-1\big)\\
	&= \sum_{e \in D_i}(\delta^2n^{2/3}-1-q)\\
	&\leq \sum_{e \in D_i}(\vert T_e\vert-q)& (\text{as }D_i\subseteq D)\\
	&\leq \sum_{e \in D_i}(\vert T_e \cap B \vert + \vert T_e \setminus B\vert-q)\\
	&\leq \sum_{e \in D_i}\vert T_e \cap B \vert& (\text{by Definition } \ref{definition: dangerous}(iv))\\
	&= \sum_{e \in D_i}\vert \{b\in B:b\in T_e \}\vert\\
	&= \sum_{e \in D_i}\vert \{b\in B:e\in T_b \}\vert& (\text{by Remark }\ref{remark: switching edges})\\
	&= \sum_{e \in D_i}\sum_{b \in B} \mathbbm{1}_{T_b}(e) = \sum_{b \in B}\sum_{e \in D_i} \mathbbm{1}_{T_b}(e)\\
	&= \sum_{b \in B}\vert T_b \cap D_i \vert \leq \sum_{b \in B}\vert T_b\vert\\
	&\leq \sum_{b \in B}\delta^2n^{2/3} = \vert B \vert \delta^2n^{2/3}\,.
\end{flalign*}
Hence $\vert B \vert \geq \vert D_i \vert \frac{(\delta^2-c)n^{2/3}-1}{\delta^2 n^{2/3}} \geq  \vert D_i \vert (1-\frac{c}{\delta^2}-n^{-2/3})$.
\item We assume $D_a = \emptyset$. Because $\beta>0.6$, the following inequality is true:
\begin{equation*}
	(\beta^2-(1-\beta)^2)(1-(1-\beta)^2)>(1-\beta)^2\,.
\end{equation*}
Therefore, for sufficiently large n, 
\begin{equation}\label{equation: beta}
	(\beta^2-(1-\beta)^2)(1-(1-\beta)^2-n^{-2/3})>(1-\beta)^2\,.
\end{equation}
We give upper and lower bounds for $\vert B \vert$:
\begin{flalign*}
	\textstyle \frac{c\delta}{2} n^2 &\geq \textstyle \vert B\vert &(\text{by } (i))\\
	&\geq \textstyle \vert D_i \vert (1-\frac{c}{\delta^2}-n^{-2/3}) &(\text{by } (v))\\
	&\geq \textstyle \vert D \setminus D_d\vert (1-\frac{c}{\delta^2}-n^{-2/3})& (\text{by } (iii))\\
	&\geq \textstyle \big( \binom{\beta n}{2}-\frac{c \delta}{2}n^2\big) (1-\frac{c}{\delta^2}-n^{-2/3}) &(\text{by } (iv))\\
	&= \textstyle \big(\frac{1}{2}(\beta^2-c\delta)(1-\frac{c}{\delta^2}-n^{-2/3})\\
	&\hspace{44.6pt} \textstyle -\frac{\beta}{2n}(1-\frac{c}{\delta^2}-n^{-2/3}) \big)n^2\\
	&\geq \textstyle \big(\frac{1}{2}(\beta^2-c\delta)(1-c\delta-n^{-2/3})\\
	&\hspace{44.6pt} \textstyle -\frac{\beta}{2n}(1-\frac{c}{\delta^2}-n^{-2/3}) \big)n^2 &(\text{as } \delta > 1)\\
	&\geq \textstyle \big(\frac{1}{2}(\beta^2-(1-\beta)^2)(1-(1-\beta)^2-n^{-2/3})\\
	&\hspace{44.6pt} \textstyle -\frac{\beta}{2n}(1-\frac{c}{\delta^2}-n^{-2/3})\big)n^2 &(\text{as } c\delta<(1-\beta)^2)\\
	&>\textstyle \big( \frac{(1-\beta)^2}{2}-\frac{\beta}{2n}(1-\frac{c}{\delta^2}-n^{-2/3})\big)n^2\,.   &(\text{with }(\ref{equation: beta}))
\end{flalign*}
The above chain of inequalities gives
\begin{align}\label{equation: c_delta>...}
	c \delta &\geq (1-\beta)^2-\textstyle \frac{\beta}{n}(1-\frac{c}{\delta^2}-n^{-2/3}) &\notag\\
	&=(1-\beta)^2 -o(1)\,.
\end{align}
Since $c\delta<(1-\beta)^2$, (\ref{equation: c_delta>...}) cannot hold for sufficiently large $n$. Thus the assumption $D_\alpha =\emptyset$ is false, and we have proved (vi).
\end{enumerate}
\end{proof}

\begin{proof}[Proof of Theorem \ref{theorem:C4}]
	We have shown in Lemma \ref{lemma:dangerous edges}(v) that $D_a \neq \emptyset$. To win the $C_4$-game Maker can now claim one of these active dangerous edges $e \in D_a$. Since $e\notin D_i$, we have $\vert T_e \vert \setminus B > q$ and because Breaker could claim only $q$ edges, in Maker's next turn at least one of the edges of $T_e$ will still be not claimed and Maker completes a copy of $C_4$ by claiming one of them.
\end{proof}

\printbibliography

\subsection*{Acknowledgement}
We would like to thank Małgorzata Bednarska-Bzdęga (Adam Mickiewicz University, Poznań) for helpful comments on this paper.

\end{document}